\newtheorem{thm}{Theorem}[section]
\newtheorem{prop}[thm]{Proposition}
\theoremstyle{definition}
\newtheorem{rem}[thm]{Remark}
\numberwithin{equation}{section}
\renewcommand{\div}{\mbox{div}\,}
\newcommand{\trace}{\mbox{trace}\,}
\newcommand{\R}{\mathbb R}
\newcommand{\e}{\varepsilon}
\newcommand{\p}{\partial}
\newcommand{\comment}[1]{}
\newenvironment{myindentpar}[1]%
{\begin{list}{}%
         {\setlength{\leftmargin}{#1}}%
         \item[]%
}
{\end{list}}
\begin{document}

\title[Singular Abreu equations in higher dimensions ]{On Singular Abreu equations in higher dimensions}
\author{Nam Q. Le}
\address{Department of Mathematics, Indiana University,
Bloomington, 831 E 3rd St, IN 47405, USA.}
\email{nqle@indiana.edu}

\thanks{The research of the author was supported in part by the National Science Foundation under grant DMS-1764248}
\subjclass[2010]{35J40, 35B65, 35J96}
\keywords{Singular Abreu equation, second boundary value problem, Monge-Amp\`ere equation, linearized Monge-Amp\`ere equation}

% ----------------------------------------------------------------
\begin{abstract}
We study the solvability of the second boundary value problem of a class of highly singular, fully nonlinear fourth order equations of Abreu type in higher dimensions under either a smallness condition or radial symmetry.

\end{abstract}
\maketitle
\section{Introduction and statements of the main results }
In this paper, which is a sequel to \cite{Le18}, we study the solvability of the second boundary value problem of a class of highly singular, fully nonlinear fourth order equations of Abreu type for a uniformly convex function $u$:
\begin{equation}
\label{Abreu}
  \left\{ 
  \begin{alignedat}{2}\sum_{i, j=1}^{n}U^{ij}w_{ij}~& =F(\cdot, u, Du, D^2 u)~&&\text{in} ~\Omega, \\\
 w~&= (\det D^2 u)^{-1}~&&\text{in}~ \Omega,\\\
u ~&=\varphi~&&\text{on}~\p \Omega,\\\
w ~&= \psi~&&\text{on}~\p \Omega.
\end{alignedat}
\right.
\end{equation}
Here and throughout, $U=(U^{ij})_{1\leq i, j\leq n}$ is the cofactor matrix of the Hessian matrix $D^2 u=(u_{ij})_{1\leq i, j\leq n}\equiv \left(\frac{\p^2 u}{\p x_i \p x_j}\right)_{1\leq i, j\leq n}$;
$\varphi\in C^{3, 1}(\overline{\Omega})$, $\psi\in C^{1, 1}(\overline{\Omega})$ with $\inf_{\p\Omega}\psi>0$. The left hand side of (\ref{Abreu}) usually appears in Abreu's equation \cite{Ab} in the problem of finding K\"ahler metrics of
constant scalar curvature in complex geometry.
\vglue 0.2cm
This type of equation arises from studying 
approximation of convex functionals such as the Rochet-Chon\'e model in product line design \cite{RC} whose Lagrangians depend on the gradient variable, subject to a convexity constraint.
Carlier-Radice \cite{CR} studied equation of the type (\ref{Abreu}) when $F$ does not depend on the Hessian variable. 
 When the function $F$ depends on the Hessian variable, (\ref{Abreu})
 was studied in \cite{Le18} in two dimensions, including the case $F=-\Delta u$. 
 
 \vglue 0.2cm
 Note that (\ref{Abreu}) consists of a Monge-Amp\`ere equation for $u$ in the form of $\det D^2 u=
w^{-1}$ and a linearized Monge-Amp\`ere equation for $w$ in the form of $$\sum_{i, j=1}^{n} U^{ij} w_{ij}=F(\cdot, u, Du, D^2 u)$$ because
the coefficient matrix $(U^{ij})$ comes from linearization of the Monge-Amp\`ere operator: $$U^{ij}=\frac{\p\det D^2 u}{\p u_{ij}}.$$ 
\vglue 0.2cm
The solvability of second boundary problems such as (\ref{Abreu}) is usually established via a priori fourth order derivative estimates and degree theory.  Two of the key ingredients for the a priori estimates are to establish (see \cite{Le18}): 
\begin{myindentpar}{1cm}
(i) positive lower and upper bounds for the Hessian determinant $\det D^2 u$; and\\
 (ii) global H\"older continuity for $w$ from 
global H\"older continuity of the linearized Monge-Amp\`ere equation with right hand side having low integrability.
\end{myindentpar}
By Theorem 1.7 in combination with Lemma 1.5 in 
\cite{LN}, any integrability more than $n/2$ right hand side of the linearized Monge-Amp\`ere equation suffices for the global H\"older continuity and $n/2$ is the precise threshold. The reason to restrict the analysis in \cite{Le18} to two dimensions even for the simple case $F=-\Delta u$ is that either $\Delta u$ is just a measure or it belongs to 
$\Delta u\in L^{1+\e_0}(\Omega)$ where 
$\e_0>0$ can be arbitrary small. The condition $n/2<1+\e_0$ with small $\e_0$ naturally leads to $n=2$.  
\vglue 0.2cm
In all dimensions, once we have the global H\"older continuity of $w$ together with the lower and upper bounds on $\det D^2 u$,
we can apply the global $C^{2,\alpha}$ estimates for the Monge-Amp\`ere equation in \cite{S, TW08} 
 to conclude that $u\in C^{2,\alpha}(\overline{\Omega})$.  We update this information to $U^{ij} w_{ij}=F(\cdot, u, Du, D^2 u)$ to have a second order uniformly elliptic  equation for $w$ with global H\"older continuous coefficients
 and bounded right hand side. This gives second order derivatives estimates for $w$. Now, fourth order derivative estimates for $u$ easily follows.  
 \vglue 0.2cm
  In this paper, we consider the higher dimensional case of (\ref{Abreu}), focusing on the right-hand side being of $p$-Laplacian type. In this case, the first two equations of (\ref{Abreu}) arise as the Euler-Lagrange equation of the convex functional
  \begin{equation}
  \label{Jfn}
  J_p(u):=\int_{\Omega} \left(\frac{|Du|^p}{p} - \log \det D^2 u\right) dx.
  \end{equation}
  When $p=2$, that is, (\ref{Abreu}) with $F=-\Delta u$, the a priori lower bound on $\det D^2 u$ in \cite{Le18} breaks down when $n\geq 3$. 
  
  Key to this analysis in \cite{Le18} is the fact that $\trace (U)= \Delta u$ in dimensions n=2.
  With this crucial fact, one can use
  $$U^{ij} (w+ \frac{1}{2} |x|^2)_{ij}=-\Delta u + \trace (U) \geq 0$$
  and then applying the maximum principle to conclude that $w+ \frac{1}{2}|x|^2$ attains its maximum on $\p\Omega$ from which the upper bound on $w$ follows which in turn implies the desired lower bound on $\det D^2 u$.
  
  If $n\geq 3$, the ratio $\frac{\trace(U)}{\Delta u}$ can be in general as small as we want; in fact, this is the case, say, when one eigenvalue of $D^2 u$ is $1$  while all other $n-1$ eigenvalues are a small constant. 
 
  Here, we use a new technique to solve (\ref{Abreu}) when $F=-\gamma \div (|Du|^{p-2} Du)$ where $p\geq 2$ and $\gamma$ is small. 
  More generally, our main result states as follows.
 \begin{thm}
  \label{thm_sm}
  Assume $n\geq 3$. 
   Let $\Omega$ be an open, smooth, bounded and uniformly convex domain in $\R^n$.
  Let $\psi\in C^{2, \beta}(\overline{\Omega})$ with $\inf_{\p\Omega}\psi >0$ and let $\varphi \in C^{4, \beta}(\overline{\Omega})$ where $\beta\in (0,1)$. Let $F(\cdot, z, {\bf p}, {\bf r}): \overline{\Omega} \times \R\times \R^n\times \R^{n\times n}$ 
  be a smooth function such that: 
  \begin{myindentpar}{1cm}
  (i) it maps compact subsets of $\overline{\Omega} \times \R\times \R^n\times \R^{n\times n}$ into compact subsets of $\R$ and \\
  (ii) 
  $F(x, u(x), Du(x), D^2 u(x))\leq 0$ in $\Omega$ for all $C^2$ convex function $u$.
  \end{myindentpar}
 If  $\gamma>0$ is a small constant depending only on $\beta, \varphi, \psi, n, F$ and $\Omega$, then there is a uniform convex solution $u\in C^{4,\beta}(\overline{\Omega})$ to the following
 second boundary value problem:
\begin{equation}
\label{Abreu_small}
  \left\{ 
  \begin{alignedat}{2}\sum_{i, j=1}^{n} U^{ij}w_{ij}~& =\gamma F(\cdot, u, Du, D^2 u)~&&\text{in} ~\Omega, \\\
 w~&= (\det D^2 u)^{-1}~&&\text{in}~ \Omega,\\\
u ~&=\varphi~&&\text{on}~\p \Omega,\\\
w ~&=  \psi~&&\text{on}~\p \Omega.
\end{alignedat}
\right.
\end{equation}
The solution is unique provided that $F$ additionally satisfies 
\begin{equation}
\label{F_mono}
\int_{\Omega}[ F(\cdot, u, Du, D^2 u) - F(\cdot, v, Dv, D^2 v)](u-v) dx\geq 0 \text{ for all } u, v\in C^2(\overline{\Omega})~\text{with } u= v~\text{on }\p\Omega. 
\end{equation}
 \end{thm}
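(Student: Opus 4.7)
The plan is to obtain existence by a continuity method in the parameter $\gamma$, with the smallness of $\gamma$ providing the uniform a priori estimates, and to derive uniqueness from the monotonicity hypothesis (\ref{F_mono}) by an integration by parts argument. I would embed (\ref{Abreu_small}) into the one-parameter family obtained by replacing $\gamma$ with $t\gamma$ for $t\in[0,1]$; at $t=0$ the system reduces to the classical Abreu second boundary value problem $U^{ij}w_{ij}=0$, which is known to admit a unique $C^{4,\beta}$ solution in all dimensions. Setting $T:=\{t\in[0,1]:\text{(\ref{Abreu_small}) with parameter }t\gamma\text{ has a }C^{4,\beta}\text{ solution}\}$, the strategy is to show $T$ is open (via the implicit function theorem applied to the fourth-order linearization, uniformly elliptic on any $C^{2,\alpha}$ solution with strictly positive Hessian) and closed (via uniform a priori $C^{4,\beta}$ estimates, valid provided $\gamma<\gamma_0(\beta,\varphi,\psi,n,F,\Omega)$), hence $T=[0,1]$.

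The a priori estimates rest on two-sided control of $\det D^2 u_t$. The upper bound $\det D^2 u_t\leq(\min_{\p\Omega}\psi)^{-1}$ is immediate from hypothesis (ii): since $U^{ij}w_{t,ij}=t\gamma F\leq 0$, $w_t$ is a supersolution of $L:=U^{ij}\p_{ij}$, so the minimum principle gives $w_t\geq\min_{\p\Omega}\psi>0$. The delicate step is the positive lower bound on $\det D^2 u_t$, equivalently an upper bound on $w_t$: because the trace identity $\trace(U)=\Delta u$ used in \cite{Le18} fails for $n\geq 3$, one must exploit the smallness of $\gamma$. My plan is to try the auxiliary function $v_t:=w_t+\lambda u_t$ for a suitable parameter $\lambda>0$. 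Using the Monge--Amp\`ere identity $Lu=U^{ij}u_{ij}=n\det D^2 u=n/w$, one has
\[
Lv_t=t\gamma F(\cdot,u_t,Du_t,D^2 u_t)+\frac{\lambda n}{w_t}.
\]
Invoking hypothesis (i) to bound $|F|$ in terms of the $C^2$-norm of $u_t$ and choosing $\lambda$ comparable to $t\gamma\|F\|_\infty\sup_\Omega w_t/n$, the right-hand side becomes nonnegative, and the maximum principle yields the self-consistent inequality
\[
\sup_\Omega w_t\leq\max_{\p\Omega}(\psi+\lambda\varphi)-\lambda\inf_\Omega u_t.
\]
For $\gamma$ below a threshold depending only on the stated data this closes and produces the desired uniform upper bound $w_t\leq M$.

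Once two-sided bounds on $\det D^2 u_t$ are in hand, the global H\"older estimate for the linearized Monge--Amp\`ere equation recalled in the introduction (Theorem 1.7 combined with Lemma 1.5 of \cite{LN}) gives $w_t\in C^\alpha(\overline{\Omega})$; the global $C^{2,\alpha}$ regularity of \cite{S,TW08} then promotes $u_t$ to $C^{2,\alpha}(\overline{\Omega})$, after which $U^{ij}w_{t,ij}=t\gamma F$ is a linear uniformly elliptic equation with H\"older coefficients and bounded right-hand side, yielding $w_t\in C^{2,\alpha}$ by Schauder theory and finally $u_t\in C^{4,\beta}$. For uniqueness under (\ref{F_mono}), I would subtract the linearized Monge--Amp\`ere equations for two solutions $u_1,u_2$, test the difference against $u_1-u_2$, and integrate by parts using the divergence-free property $\p_j U^{ij}=0$ together with the strict concavity of $\log\det$ on positive-definite matrices; the monotonicity hypothesis then forces $u_1\equiv u_2$.

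The main obstacle is the upper bound on $w_t$: the circular dependence between bounding $|F|$ (which needs $C^2$ control on $u_t$) and bounding $\det D^2 u_t$ from below (which needs $|F|$) has to be broken, and the smallness of $\gamma$ is exactly the quantitative lever that closes the barrier/bootstrap loop where the purely $L^\infty$ two-dimensional argument of \cite{Le18} cannot proceed.
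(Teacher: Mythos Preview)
Your overall architecture (continuity in $t$, openness by the implicit function theorem, closedness by a priori estimates, uniqueness via monotonicity and concavity of $\log\det$) is reasonable, and the easy direction $\det D^2 u\le(\min_{\partial\Omega}\psi)^{-1}$ is correct. The gap is exactly where you flag it: the upper bound on $w$. Your barrier $v_t=w_t+\lambda u_t$ with $\lambda\sim t\gamma\|F\|_\infty\sup_\Omega w_t/n$ requires knowing $\|F(\cdot,u_t,Du_t,D^2u_t)\|_{L^\infty}$, hence an a priori $C^2$ bound on $u_t$, before you can even set up the barrier; but the $C^2$ bound is downstream of the very determinant bound you are trying to prove. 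Saying ``smallness of $\gamma$ closes the loop'' is not enough: as written, your set $T$ carries no quantitative control, so closedness demands \emph{unconditional} a priori estimates, which your argument does not supply. One could try to rescue this by redefining $T$ to include the bound $\|u_t\|_{C^{2,\alpha}}\le 2C_0$ and showing it is self-improving for small $\gamma$, but you have not done this, and even then the step ``two-sided $\det D^2u$ bounds $\Rightarrow w\in C^\alpha$'' needs the right-hand side in $L^p$ for some $p>n/2$, which for general $F$ depending on $D^2u$ is not available from $D^2u\in L^{1+\varepsilon_0}$ alone.

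The paper avoids this circularity by a genuinely different device. It first proves an auxiliary result (Proposition~\ref{thm_N}): for any measurable $0\le f\le 1$, the problem with right-hand side $-(\Delta u)^{\frac{1}{n-1}}(\det D^2u)^{\frac{n-2}{n-1}}f$ admits a solution with $C^{2,\alpha}$ estimates depending only on $n,\Omega,\varphi,\psi$ and \emph{not} on $f$. The reason is the elementary symmetric-function inequality
\[
\trace(U)=S_{n-1}(D^2u)\ \ge\ (\Delta u)^{\frac{1}{n-1}}(\det D^2u)^{\frac{n-2}{n-1}},
\]
which is the higher-dimensional replacement for the $n=2$ identity $\trace(U)=\Delta u$; it makes $w+|x|^2$ a subsolution regardless of $u$, giving the upper bound on $w$ with no circularity. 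Moreover this particular right-hand side lies in $L^{(n-1)(1+\varepsilon_0)}$ once $\Delta u\in L^{1+\varepsilon_0}$, and $(n-1)(1+\varepsilon_0)>n/2$ in every dimension, so the global H\"older estimate for the linearized Monge--Amp\`ere equation applies. Having these $\gamma$-free $C^{2,\alpha}$ bounds, the paper then solves (\ref{Abreu_small}) by taking
\[
f_\gamma=\min\Bigl\{\frac{-\gamma F(\cdot,u,Du,D^2u)}{(\Delta u)^{\frac{1}{n-1}}(\det D^2u)^{\frac{n-2}{n-1}}},\,1\Bigr\}\in[0,1],
\]
applying Proposition~\ref{thm_N}, and finally observing that the uniform $C^2$ bounds force the minimum to equal its first argument once $\gamma$ is small, so the truncated equation collapses to $U^{ij}w_{ij}=\gamma F$. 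The smallness of $\gamma$ enters only at this last, purely algebraic, step---not in the a priori estimates---which is why the circularity you encountered never arises.
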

 \begin{rem}
 It would be very interesting to remove the smallness of $\gamma$ in Theorem \ref{thm_sm}.
 \end{rem}
 Our next result is concerned with radial solutions for $p$-Laplacian right hand side.
 \begin{thm}
 \label{rad_thm}
 Assume that $\Omega= B_1(0)\subset\R^n$ and let $\varphi$ and $\psi$ be constants with $\psi>0$. Let $p\in (1,\infty)$. 
 Let $\beta=p-1$ if $p<2$ and $\beta \in (0,1)$ if $p\geq 2$. Let $f\in\{-1, 1\}$.
Consider the second boundary value problem:
 \begin{equation}
\label{Abreu_rad}
  \left\{ 
  \begin{alignedat}{2}\sum_{i, j=1}^{n}U^{ij}w_{ij}~& =f div (|Du|^{p-2} Du)~&&\text{in} ~\Omega, \\\
 w~&= (\det D^2 u)^{-1}~&&\text{in}~ \Omega,\\\
u ~&=\varphi~&&\text{on}~\p \Omega,\\\
w ~&= \psi~&&\text{on}~\p \Omega.
\end{alignedat}
\right.
\end{equation}
\begin{myindentpar}{0.8cm}
 (i) Let $f=-1$.  Then 
 there is a unique radial, uniformly convex solution $u\in C^{3,\beta}(\overline{\Omega})$
 to (\ref{Abreu_rad}). 
 %This radial solution is the unique solution of (\ref{Abreu_rad}).
 \\
 (ii) Let $f=1$ and let $p\in (1, n]$. 
 In the case $p=n$, we assume further that $\psi >\frac{1}{n}$.
 Then 
 there is a unique radial, uniformly convex solution $u\in C^{3,\beta}(\overline{\Omega})$
 to (\ref{Abreu_rad}).\\
  (iii) Let $f=1$ and let $p> n$. Suppose that $\psi \geq M(n, p)$ for some sufficiently large constant $M>0$. 
 Then 
 there is a radial, uniformly convex solution $u\in C^{3,\beta}(\overline{\Omega})$
 to (\ref{Abreu_rad}).
\end{myindentpar}
 \end{thm}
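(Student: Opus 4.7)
\noindent\textbf{Proof strategy for Theorem \ref{rad_thm}.}

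\textbf{Reduction to an ODE.} For a radial, uniformly convex $u(x)=u(r)$ with $r=|x|$, the Hessian $D^2 u$ has eigenvalues $u''(r)$ and $u'(r)/r$ with multiplicities $1$ and $n-1$. A direct computation in the common eigenbasis of $U$ and $D^2 w$ (for radial $w$) yields
$$\sum_{i,j=1}^{n} U^{ij}w_{ij} = r^{-(n-1)}\bigl[(u')^{n-1}w'\bigr]', \qquad \div(|Du|^{p-2}Du) = r^{-(n-1)}\bigl[r^{n-1}(u')^{p-1}\bigr]'.$$
Substituting into (\ref{Abreu_rad}) and integrating from $0$ to $r$, using $u'(0)=0$ (forced by $C^2$ smoothness at the origin), gives the first integral $(u')^{n-1}w' = f r^{n-1}(u')^{p-1}$. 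Writing $v=u'$ and combining with the Monge--Amp\`ere relation $w = r^{n-1}/(v'v^{n-1})$, the problem reduces to the coupled first-order system on $(0,1]$:
$$w'(r) = f r^{n-1} v(r)^{p-n}, \qquad (v^n)'(r) = n r^{n-1}/w(r), \qquad v(0)=0,$$
with boundary condition $w(1)=\psi$; the constant $u(0)$ is afterwards fixed by $u(0)=\varphi-\int_0^1 v(r)\,dr$.

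\textbf{Shooting.} I parametrize local solutions by $\mu := w(0)>0$; smoothness at the origin forces $v(r)\sim \mu^{-1/n}r$ near $r=0$, equivalently $u''(0)=\mu^{-1/n}$. Local existence and uniqueness of the $C^1$ pair $(v(\cdot;\mu),w(\cdot;\mu))$ follow from a standard contraction argument on the integral form
$$w(r)=\mu + f\int_0^r s^{n-1}v(s)^{p-n}\,ds, \qquad v(r)^n = n\int_0^r s^{n-1}/w(s)\,ds,$$
since $s^{n-1}v^{p-n}\sim \mu^{(n-p)/n}s^{p-1}$ is integrable at $0$ for every $p>1$. Define the shooting function $G(\mu):=w(1;\mu)$; continuity of $G$ on its natural domain follows from continuous dependence on initial data. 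The heart of the proof is to show that $\psi$ lies in the range of $G$ in each of the three cases.

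\textbf{Case analysis.} In (i), $f=-1$ makes $w$ strictly decreasing; the trajectory exists on $[0,1]$ exactly when $\mu$ exceeds a threshold $\mu_*\geq 0$, and on $(\mu_*,\infty)$ I expect $G$ to be strictly increasing with $G(\mu_*^+)=0$ and $G(\infty)=\infty$. In (ii), $f=1$ with $p\leq n$ gives a strictly increasing $w$ and a trajectory on all of $[0,1]$; asymptotic analysis yields $G(\mu)\sim \mu + \mu^{(n-p)/n}/p$ at both endpoints, hence $G((0,\infty))=(0,\infty)$ when $p<n$, while the identity $G(\mu)=\mu+1/n$ holds when $p=n$ (explaining the restriction $\psi>1/n$). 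In (iii), $f=1$ with $p>n$ gives $G(0^+)=\infty$ (since $v(1)\sim \mu^{-1/n}\to \infty$ and $v^{p-n}(1)\sim \mu^{(n-p)/n}\to \infty$) and $G(\infty)=\infty$, so $G$ attains a positive minimum $M(n,p)$, and $\psi\geq M$ places $\psi$ in the range. In each case the intermediate value theorem yields a suitable $\mu$; $u\in C^{3,\beta}(\overline{\Omega})$ follows from ODE regularity, the exponent $\beta=p-1$ for $p<2$ reflecting the $C^{p-1}$ regularity of $v\mapsto v^{p-1}$ at $v=0$. Uniqueness among radial solutions in (i) and (ii) comes from the strict monotonicity of $G$; this monotonicity fails in (iii), consistent with no uniqueness being claimed there.

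\textbf{Main obstacle.} The principal technical difficulties are to control the shooting trajectory globally when either $w$ approaches $0$ (case (i), $\mu\to\mu_*^+$) or $v^{p-n}$ blows up with $v$ (case (iii), $p>n$). Accordingly, the key steps are the identification of the threshold $\mu_*$ and the monotonicity of $G$ on $(\mu_*,\infty)$ in (i), together with the quantitative lower bound yielding the positive minimum $M(n,p)$ in (iii); these are what render the intermediate value theorem applicable.
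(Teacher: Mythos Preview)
Your ODE reduction and the first integral $(u')^{n-1}w'=fr^{n-1}(u')^{p-1}$ are correct and coincide with the paper's. What you miss is a \emph{second} first integral that the paper exploits and that collapses the whole shooting apparatus. Dividing your two equations (with $v=u'$) gives
\[
\frac{w'}{w}=f\,v^{p-1}v'=\Bigl(\tfrac{f}{p}v^{p}\Bigr)',\qquad\text{hence}\qquad w(r)=w(0)\,e^{\,\frac{f}{p}v(r)^{p}}.
\]
Substituting back into $(v^n)'=nr^{n-1}/w$ yields the separable equation $\mu\,e^{\frac{f}{p}v^{p}}v^{n-1}v'=r^{n-1}$ (with $\mu=w(0)$), whose integral is $\mu\int_0^{v(r)}e^{\frac{f}{p}s^{p}}s^{n-1}\,ds=r^{n}/n$. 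Evaluating at $r=1$ and imposing $w(1)=\psi$ turns the whole problem into a \emph{single scalar equation} for $t:=v(1)$,
\[
\psi\int_0^{t}e^{\frac{f}{p}s^{p}}s^{n-1}\,ds=\tfrac{1}{n}\,e^{\frac{f}{p}t^{p}},
\]
which the paper then analyzes directly: case (i) is a monotone-vs-monotone crossing; case (ii) with $p=n$ is solved in closed form (recovering your $G(\mu)=\mu+1/n$), and for $p<n$ a first-crossing argument gives existence and uniqueness; case (iii) is handled by comparing values at $t=0$ and $t=1$.

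Your shooting scheme is not wrong---indeed $\mu$ and the paper's parameter $t=v(1)$ are in bijection via $\mu\int_0^{t}e^{\frac{f}{p}s^p}s^{n-1}\,ds=1/n$, so the two parametrizations are equivalent---but without the conserved quantity $w\,e^{-\frac{f}{p}v^{p}}$ you are left asserting the hard parts (``I expect $G$ to be strictly increasing'', identification of $\mu_*$, global control when $p>n$) rather than proving them. Once the first integral is in hand, every one of the obstacles you flag in your last paragraph disappears, and the monotonicity/range claims about $G$ become immediate consequences of one-variable calculus. So the gap is not in the strategy but in a missed algebraic simplification that renders the strategy unnecessary.
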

 \begin{rem}
 Regarding p-Laplacian right hand side,
 even in the two dimensions, the analysis in \cite{Le18} left open the case $F=-\div (|Du|^{p-2} Du)$ when $p\in (1,2)$. The missing ingredient was the lower bound for $\det D^2 u$ in the a priori estimates. If this is obtained, then one can use the recent result in \cite{Le19}
 to establish the solvability of (\ref{Abreu}); see the proof of Theorem 1.3 in \cite{Le19}.
 \end{rem}
 \vglue 0.2cm
 \begin{rem} 
The size condition on $\psi$ in Theorem \ref{rad_thm} (ii) is optimal. We can see this in two dimensions as follows. If $f\equiv 1$, $n=p=2$ and $0<\psi \leq 1/2$, then there are no
 uniformly convex solutions $u\in C^{4}(\overline{\Omega})$
 to (\ref{Abreu_rad}). Indeed, if such a uniformly convex solution $u$ exists then the first and the last equation of (\ref{Abreu_rad}) implies that $$w(x) = \psi  + \frac{1}{2}(|x|^2-1).$$ However, since $\psi \leq1/2$, there is $x\in\Omega$ such that $w(x)\leq0$, which is a contradiction
to the uniform convexity of $u$ and $w=(\det D^2 u)^{-1}$.
 \end{rem}
 When $n=p=2$, we can remove the symmetry conditions in Theorem \ref{rad_thm}.
\begin{prop}
 \label{thm_C}
 Let $\Omega$ be an open, smooth, bounded and uniformly convex domain in $\R^n$ where $n=2$. Assume $f\geq 0$ and $f\in L^{\infty}(\Omega)$. 
 Assume that $\varphi\in W^{4, q}(\Omega)$, $\psi\in W^{2, q}(\Omega)$ where $q>n$ with 
 \begin{equation}
 \label{size_psi}
 \inf_{x\in \p\Omega}\left(\psi(x)-\frac{\|f\|_{L^{\infty}(\Omega)}}{2}|x|^2\right)>0.
 \end{equation}
 Then there is a uniform convex solution $u\in W^{4,q}(\Omega)$ to the following
 second boundary value problem:
\begin{equation}
\label{Abreu_C}
  \left\{ 
  \begin{alignedat}{2}\sum_{i, j=1}^{n}U^{ij}w_{ij}~& =f \Delta u ~&&\text{in} ~\Omega, \\\
 w~&= (\det D^2 u)^{-1}~&&\text{in}~ \Omega,\\\
u ~&=\varphi~&&\text{on}~\p \Omega,\\\
w ~&= \psi~&&\text{on}~\p \Omega.
\end{alignedat}
\right.
\end{equation}
If $f$ is a nonnegative constant, $\varphi\in C^{\infty}(\overline{\Omega})$, and $\psi\in C^{\infty}(\overline{\Omega})$ then there is a solution $u\in C^{\infty}(\overline{\Omega})$.
 \end{prop}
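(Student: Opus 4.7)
The plan is to first establish a priori two-sided positive bounds on $\det D^2 u$ that are specific to $n=2$, then follow the scheme from \cite{Le18}: $C^{2,\alpha}$ regularity for $u$ via a H\"older estimate for the linearized Monge--Amp\`ere equation, Schauder bootstrap to higher regularity, existence by degree theory, and finally passage to rough data by approximation. The crucial observation (as already highlighted in the introduction) is the identity $\trace(U) = \Delta u$ when $n=2$. Combined with the maximum principle for $L_u:=U^{ij}\p_{ij}$ and the signs $f\geq 0$, $\Delta u\geq 0$, this gives $L_u w = f\,\Delta u\geq 0$, whence $w\leq \max_{\p\Omega}\psi$ and therefore $\det D^2 u \geq 1/\max_{\p\Omega}\psi>0$. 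For the opposite bound, consider $\tilde w := w - \tfrac{\|f\|_{L^{\infty}}}{2}|x|^2$; then $L_u \tilde w = (f - \|f\|_{L^{\infty}})\Delta u \leq 0$, so the minimum principle together with (\ref{size_psi}) yields $\tilde w \geq \min_{\p\Omega}\bigl(\psi - \tfrac{\|f\|_{L^{\infty}}}{2}|x|^2\bigr) > 0$. Hence $w$ is bounded below by a positive constant and $\det D^2 u$ is bounded above.

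Assume first that $f$ is a non-negative constant and $\varphi,\psi\in C^{\infty}(\ov\Omega)$. With $0<c_0\leq w\leq C_0$ in hand, the bootstrap scheme of \cite{Le18} proceeds as follows. Since $n=2$, Caffarelli's $W^{2,1+\eps}$ estimate for the Monge--Amp\`ere equation gives $\Delta u\in L^{1+\eps}(\Omega)$; the exponent $1+\eps>n/2=1$ lies above the critical threshold, so the global H\"older estimate for the linearized Monge--Amp\`ere equation (Theorem 1.7 combined with Lemma 1.5 in \cite{LN}) yields $w\in C^{\alpha}(\ov\Omega)$. The global $C^{2,\alpha}$ theory of \cite{S,TW08} then gives $u\in C^{2,\alpha}(\ov\Omega)$; the first equation of (\ref{Abreu_C}) now becomes uniformly elliptic in $w$ with H\"older coefficients and bounded right-hand side, and Schauder estimates push $u$ to $C^{4,\alpha}(\ov\Omega)$, then by bootstrap to $C^{\infty}(\ov\Omega)$. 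With these a priori estimates uniform along the homotopy replacing $f$ by $tf$ for $t\in[0,1]$, a degree-theoretic argument as in \cite{Le18} produces a smooth solution at $t=1$.

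For the rough-data case, approximate $f,\varphi,\psi$ by smooth data $f_k\geq 0$ with $\|f_k\|_{L^{\infty}}\leq \|f\|_{L^{\infty}}$, $\varphi_k\to\varphi$ in $W^{4,q}$ and $\psi_k\to\psi$ in $W^{2,q}$, arranged so that the strict positivity in (\ref{size_psi}) is preserved uniformly in $k$. The previous step supplies smooth solutions $u_k$ with uniform two-sided bounds on $w_k$; $L^q$-regularity for the linearized Monge--Amp\`ere equation (with bounded coefficients and right-hand side $f_k\Delta u_k\in L^{\infty}$) combined with Calder\'on--Zygmund $W^{2,q}$ theory for the Monge--Amp\`ere equation yields a uniform $W^{4,q}$ bound, and passing to a weak limit gives $u\in W^{4,q}(\Omega)$ solving (\ref{Abreu_C}). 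The main obstacle is keeping the two-sided control on $w$ stable under approximation: the strict inequality in (\ref{size_psi}) is exactly what preserves a uniform positive lower bound on $w_k$, while $f\geq 0$ gives the upper bound automatically. The other delicate point, which forces $n=2$, is that the global $C^{\alpha}$ estimate for $w$ needs the right-hand side of the linearized Monge--Amp\`ere equation to lie in $L^q$ with $q>n/2$, and only in two dimensions does Caffarelli's $W^{2,1+\eps}$ estimate supply the necessary integrability of $\Delta u$.
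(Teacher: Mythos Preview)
Your core argument---the two-sided positive bounds on $w$ via the maximum principle applied to $w$ and to $w-\tfrac{\|f\|_{L^\infty}}{2}|x|^2$, using $\trace(U)=\Delta u$ in dimension two---is exactly the paper's proof; the paper then simply invokes the bootstrap of Proposition~\ref{thm_N} and the degree-theoretic existence machinery of \cite{Le18} directly for $W^{4,q}$ solutions, without an approximation layer.

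Your reorganization (smooth case first, then approximation for rough data) is fine in principle, but there is a small logical gap: in the approximation step you take smooth, generally \emph{non-constant} $f_k\geq 0$ and invoke ``the previous step'' for existence of $u_k$, yet the previous step was carried out only for constant $f$. The fix is trivial---nothing in your a priori bounds or in the homotopy $t\mapsto tf$ uses constancy of $f$, so the smooth-data existence argument goes through verbatim for any smooth $f\geq 0$---but you should say so explicitly. Alternatively, you can bypass approximation entirely, as the paper does, by establishing the $W^{4,q}$ a priori estimate directly for $f\in L^\infty$ and applying degree theory in that setting.
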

 
 The key ingredient in the proof of Theorem \ref{thm_sm} is the solvability and uniform estimates in $W^{4, p}(\Omega)$ for $p>n$ of (\ref{Abreu}) when $$F\sim-(\Delta u)^{\frac{1}{n-1}} (\det D^2 u)^{\frac{n-2}{n-1}}$$
 which reduces to $F\sim-\Delta u$ in two dimensions. This result, and its slightly more general version in Proposition \ref{thm_N}, can be of independent interest.

 \begin{prop}
 \label{thm_N}
 Let $\Omega$ be an open, smooth, bounded and uniformly convex domain in $\R^n$.
 Assume that $\varphi\in W^{4, q}(\Omega)$, $\psi\in W^{2, q}(\Omega)$ with $\inf_{\p\Omega}\psi>0$ where $q>n$. Let $k\in\{1,\cdots, n-1\}$.
 Assume that $0\leq f, g\leq 1$. 
 Then there is a uniform convex solution $u\in W^{4,q}(\Omega)$ to the following
 second boundary value problem:
\begin{equation}
\label{Abreu_Newton}
  \left\{ 
  \begin{alignedat}{2}\sum_{i, j=1}^{n}U^{ij}w_{ij}~& =-(\Delta u)^{\frac{1}{n-1}} (\det D^2 u)^{\frac{n-2}{n-1}} f-[S_{k}(D^2 u)]^{\frac{1}{k(n-1)}} (\det D^2 u)^{\frac{n-2}{n-1}} g~&&\text{in} ~\Omega, \\\
 w~&= (\det D^2 u)^{-1}~&&\text{in}~ \Omega,\\\
u ~&=\varphi~&&\text{on}~\p \Omega,\\\
w ~&= \psi~&&\text{on}~\p \Omega.
\end{alignedat}
\right.
\end{equation}
If $f\equiv 1$ and $g\equiv 1$, $\varphi\in C^{4, \beta}(\overline{\Omega})$, and $\psi\in C^{2, \beta}(\overline{\Omega})$ then there is a solution $u\in C^{4,\beta}(\overline{\Omega})$.
 \end{prop}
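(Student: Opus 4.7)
The plan is to solve (\ref{Abreu_Newton}) by the method of continuity combined with Leray-Schauder degree theory, in the style of \cite{Le18}. Let $F_t$ denote $t$ times the right-hand side of (\ref{Abreu_Newton}), $t \in [0,1]$. At $t=0$ the problem is the standard Abreu second-boundary-value problem, for which solvability is known; the main task is therefore a uniform-in-$t$ a priori estimate in $W^{4,q}(\Omega)$ (closedness of the solvability interval), while openness is a linearization argument as in \cite{Le18}.

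The crucial new ingredient is a pointwise algebraic inequality. Let $\lambda_1,\ldots,\lambda_n>0$ be the eigenvalues of $D^2 u$ and set $\nu_i=1/\lambda_i$. A direct computation gives $S_{n-1}(\nu) = \Delta u/\det D^2 u$, and Maclaurin's inequality applied to $\nu$ gives $S_{n-1}(\nu)^{1/(n-1)} \leq c_n\, S_1(\nu) = c_n\,\trace((D^2 u)^{-1})$. Multiplying by $\det D^2 u$ and using $\trace U = (\det D^2 u)\,\trace((D^2 u)^{-1})$ yields
$$(\Delta u)^{\frac{1}{n-1}}(\det D^2 u)^{\frac{n-2}{n-1}} \leq c_n\, \trace U,$$
and the $S_k$ term is bounded by the same quantity using $S_k^{1/k} \leq C_{n,k}\,\Delta u$ (again from Maclaurin). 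Combined with $0\leq f, g\leq 1$, this gives
$$0 \leq -F \leq C_{n,k}\, \trace U,$$
or equivalently $L\bigl(w + (C_{n,k}/2)|x|^2\bigr) \geq 0$, where $L := \sum U^{ij}\partial_{ij}$ is the linearized Monge-Amp\`ere operator.

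The a priori estimates then follow cleanly. Since $L w = F \leq 0$, the maximum principle gives $w \geq \min_{\partial\Omega}\psi > 0$, hence $\det D^2 u \leq 1/\min_{\partial\Omega}\psi$. Since $L\bigl(w + (C_{n,k}/2)|x|^2\bigr) \geq 0$, the maximum principle again gives $w \leq C$ on $\overline{\Omega}$, hence a positive lower bound on $\det D^2 u$. With two-sided bounds on $\det D^2 u$ in hand, global H\"older continuity of $w$ follows from the H\"older theory of \cite{LN} applied to the pair $w$ and $w + (C_{n,k}/2)|x|^2$, which are respectively $L$-superharmonic and $L$-subharmonic with H\"older boundary data. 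The global $C^{2,\alpha}$ regularity of the Monge-Amp\`ere equation \cite{S, TW08} applied to $\det D^2 u = w^{-1} \in C^{\alpha}(\overline{\Omega})$ promotes $u$ to $C^{2,\alpha}(\overline\Omega)$, so $U \in C^{\alpha}$; Schauder estimates for $Lw = F \in C^{\alpha}$ then upgrade $w$ to $C^{2,\alpha}$, and a final differentiation of the Monge-Amp\`ere equation delivers $u \in W^{4,q}(\Omega)$, or $C^{4,\beta}$ in the smooth case $f\equiv g\equiv 1$.

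The main obstacle is the initial H\"older estimate for $w$: the right-hand side $F$ is not a priori bounded, and the bound $|F| \leq C\,\trace U = C\,S_{n-1}(D^2 u)$ does not put $F$ in $L^q$ for any $q > n/2$ without further information (indeed, $\Delta u$ alone is only in $L^{1+\eps_0}$ as noted in the introduction). The point of the Maclaurin-type inequality above is precisely to sidestep this obstruction: rather than feeding $F$ into an $L^q$-to-$C^{\alpha}$ estimate, one exhibits $w$ and $w + (C_{n,k}/2)|x|^2$ as $L$-sub- and supersolutions of the homogeneous equation, reducing matters to the H\"older theory of \cite{LN} under two-sided bounds on $\det D^2 u$, which are already in hand.
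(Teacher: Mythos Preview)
Your outline shares the paper's backbone: the algebraic inequality $\trace U \geq (\Delta u)^{1/(n-1)}(\det D^2 u)^{(n-2)/(n-1)}$ (which the paper proves directly, with constant $1$, via $(\sum_j 1/\lambda_j)^{n-1}\geq \sum_i\prod_{j\neq i}1/\lambda_j$), the two maximum-principle applications giving $C_1\leq w\leq C_2$, and the bootstrap through $C^{2,\alpha}$ to $W^{4,q}$.

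The divergence is at the H\"older step for $w$, and your diagnosis of the obstacle there is mistaken. You write that the bound $|F|\leq C\,\trace U$ does not place $F$ in $L^q$ for $q>n/2$; true, but that is not the bound the paper uses. Once $\det D^2 u$ is pinched, the \emph{form} of $F$ gives the sharper estimate $|F|\leq C(\Delta u)^{1/(n-1)}$. The paper then invokes the global $W^{2,1+\varepsilon_0}$ estimates for Monge--Amp\`ere (De Philippis--Figalli--Savin, Schmidt, Savin) to get $\Delta u\in L^{1+\varepsilon_0}$, whence $F\in L^{(n-1)(1+\varepsilon_0)}$, and $(n-1)(1+\varepsilon_0)>n/2$ for every $n\geq 2$. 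The H\"older theory of \cite{LN} then applies to $U^{ij}w_{ij}=F$ directly. Your sub/super-solution workaround is plausible in the interior (Caffarelli--Guti\'errez Harnack handles that), but the global results you cite from \cite{LN} are stated for solutions with $L^q$ right-hand side, not for pairs of $L$-sub- and supersolutions with H\"older boundary data, so that step would need further justification. The paper's route is both simpler and cleanly citable; what it buys is precisely that the exponent $1/(n-1)$ in $F$ converts the weak integrability of $\Delta u$ into integrability above the critical threshold $n/2$.

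A smaller issue: in your final bootstrap for general $0\leq f,g\leq 1$, after $u\in C^{2,\alpha}$ the right-hand side $F$ is only in $L^\infty$ (since $f,g$ are merely bounded, not H\"older), so you must use $W^{2,q}$ estimates for $Lw=F$, not Schauder, to reach $w\in W^{2,q}$ and hence $u\in W^{4,q}$. Schauder is reserved for the case $f\equiv g\equiv 1$, where the paper iterates once more (first $F\in C^{\alpha/(n-1)}$, then $F\in C^{\beta}$) to land in $C^{4,\beta}$.
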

 In Proposition \ref{thm_N} and what follows, for a symmetric $n\times n$ matrix $A$ with eigenvalues $\lambda_1,\cdots,\lambda_n$, let us denote its elementary symmetric functions $S_k(A)$ where $k=0, 1, \cdots, n$ by
$$S_0(A)=1,~ S_k(A) =\sum_{1\leq i_1<\cdots<i_k\leq n}\lambda_{i_1}\cdots \lambda_{i_k}  (k\geq 1).$$
The rest of the paper is devoted to proving Theorems \ref{thm_sm} and \ref{rad_thm}, and Propositions \ref{thm_C} and  \ref{thm_N}.

\section{Proofs of the main results}
In this section, we prove Theorems \ref{thm_sm} and \ref{rad_thm}, and Propositions \ref{thm_C} and  \ref{thm_N}. As in \cite{Le18}, it suffices to prove appropriate fourth order derivative a priori estimates.

For certain fixed parameters $\beta$ (in Theorem \ref{thm_sm}), $p$ (in Theorem \ref{rad_thm}) and $k, q$  (in Propositions \ref{thm_C} and \ref{thm_N}), we call a positive constant {\it universal} if  it depends only on $n$, $\Omega$, $\psi,\varphi$ and those fixed parameters.
We use $c, C, C_1, C_2,\cdots,$ to denote universal constants and their values may change from line to line.

 \begin{proof}[Proof of Proposition \ref{thm_N}] 
 For simplicity, we denote
 $$F(x)=-(\Delta u(x))^{\frac{1}{n-1}} (\det D^2 u(x))^{\frac{n-2}{n-1}} f(x)-[S_{k}(D^2 u(x))]^{\frac{1}{k(n-1)}} (\det D^2 u(x))^{\frac{n-2}{n-1}} g(x).$$
 We establish a priori estimates for a solution $u\in W^{4, q}(\Omega)$. Since $U^{ij} w_{ij}\leq 0$, by the maximum principle, the function $w$ attains its minimum value on the boundary $\p\Omega$. Thus
 $$w\geq \inf_{\p\Omega}\psi:= C_1>0.$$
 On the other hand, we note that for each $k\in\{1,\cdots, n-1\}$,
 \begin{eqnarray}
 \label{Nineq_00} \Delta u\geq [S_{k}(D^2 u)]^{\frac{1}{k}}
 \end{eqnarray}
 and furthermore,
 \begin{equation}\label{Nineq_01}\trace (U^{ij})=S_{n-1}(D^2 u)\geq (\Delta u)^{\frac{1}{n-1}} (\det D^2 u)^{\frac{n-2}{n-1}}.
 \end{equation}
 Indeed,  (\ref{Nineq_01}) is equivalent to
 $(\det D^2 u) \text{trace}(D^2 u^{-1}) \geq (\Delta u)^{\frac{1}{n-1}} (\det D^2 u)^{\frac{n-2}{n-1}}$,
 or
 \begin{equation}
 \label{Nineq_02}
[\text{Trace}(D^2 u^{-1})]^{n-1} \geq \frac{\Delta u} { \det D^2 u} .
 \end{equation}
 Let $\lambda_1,\cdots, \lambda_n$ be eigenvalues of $D^2 u$. Then (\ref{Nineq_02}) reduces to
 $$ (\sum_{j=1}^n \frac{1}{\lambda_j})^{n-1}\geq \frac{\sum_{i=1}^n \lambda_i}{\prod_{i=1}^n \lambda_i}= \sum_{i=1}^n \prod_{j\neq i}^n\frac{ 1}{\lambda_j}.$$
 This is obvious by the expansion of the left hand side.
 
It follows from (\ref{Nineq_00}) and (\ref{Nineq_01})  and $0\leq f, g\leq 1$ that
$$U^{ij} (w+ |x|^2)_{ij}\geq 0.$$ 
By the maximum principle, the function $w+ |x|^2$ attains its maximum value on the boundary $\p\Omega$. Thus
$$w+|x|^2 \leq \max_{\p\Omega} (\psi+|x|^2)\leq C_2<\infty.$$
Therefore $w\leq C_2$. As a consequence,
$$C_1\leq w\leq C_2.$$
From the second equation of (\ref{Abreu_Newton}), we can find a universal constant $C>0$ such that
\begin{equation}
\label{udet_bound}
C^{-1}\leq\det D^2 u\leq C~\text{in }\Omega.
\end{equation}
 
 By constructing a suitable barrier, we find that $Du$ is universally bounded in $\overline{\Omega}$:
 \begin{equation}
 \label{Dub}
 \|Du\|_{L^{\infty}(\Omega)}\leq C.
 \end{equation}
From $\varphi\in W^{4, q}(\Omega)$ with $q>n$, we have $\varphi\in C^3(\overline{\Omega})$ by the Sobolev embedding theorem. By assumption, $\Omega$ is bounded, smooth and uniformly convex.
From $u=\varphi$ on $\p\Omega$ and (\ref{udet_bound}), we can apply the global $W^{2, 1+\e_0}$ estimates for the Monge-Amp\`ere equation, which
follow from the interior $W^{2, 1+\e_0}$ estimates in De Philippis-Figalli-Savin \cite{DPFS} and Schmidt \cite{Sch} and
the global estimates in 
 Savin \cite{S3} (see also \cite[Theorem 5.3]{Fi}), 
to conclude that
\begin{equation}
\label{D^2uL1}
\|D^2 u\|_{L^{1+\e_0}(\Omega)}\leq C^{\ast}_1
\end{equation}
for some universal constants $\e_0>0$ and $C^{\ast}_1>0$. 

 Thus, from (\ref{D^2uL1}) and (\ref{Nineq_00}), we find that
  $$\|F \|_{L^{(n-1)(1+\e_0)}(\Omega)}\leq C_3$$ for a universal constant $C_3>0$. Note that for all $n\geq 2$ and all $\e_0>0$,
  $$(n-1)(1+\e_0)>n/2.$$
  From $\psi\in W^{2, q}(\Omega)$ with $q>n$, we have $\psi\in C^1(\overline{\Omega})$ by the Sobolev embedding theorem. 
  Now, we apply the global H\"older estimates for the linearized Monge-Amp\`ere equation in \cite[Theorem 1.7 and Lemma 1.5]{LN} to $U^{ij} w_{ij}=F$ in $\Omega$ with boundary value $w=\psi\in C^{1}(\p\Omega)$ on $\p\Omega$ to 
  conclude that $w\in C^{\alpha}(\overline{\Omega})$
  with 
  \begin{equation}
  \label{walpha}
  \|w\|_{C^{\alpha}(\overline{\Omega})}\leq  C\left(\|\psi\|_{C^{1}(\p\Omega)} + \|F\|_{L^{(n-1)(1+\e_0)}(\Omega)}\right)\leq C_4
  \end{equation}
  for universal constants $\alpha\in (0, 1)$ and $C_4>0$. Now, we note that $u$ solves the Monge-Amp\`ere equation
  $$\det D^2 u= w^{-1}$$
  with right hand side being in $C^{\alpha}(\overline{\Omega})$ and boundary value $\varphi\in C^3(\p\Omega)$ on $\p\Omega$.
  Therefore, by the global $C^{2,\alpha}$ estimates for the Monge-Amp\`ere equation \cite{TW08, S}, we have $u\in C^{2,\alpha}(\overline{\Omega})$
  with universal estimates
  \begin{equation}
  \label{u2alpha}
   \|u\|_{C^{2,\alpha}(\overline{\Omega})}\leq C_5~\text{and } C_5^{-1} I_n\leq D^2 u\leq C_5 I_n.
   \end{equation}
    Here and throughout, we use $I_n$ to denote the $n\times n$ identity matrix.
   As a consequence, the second order operator $U^{ij} \p_{ij}$ is uniformly elliptic with H\"older continuous coefficients. 
   Now, we observe from the definition of $F$ and (\ref{u2alpha}) that
\begin{equation}
\label{C5_eq}
\|F \|_{L^{\infty}(\Omega)}\leq C_6.
\end{equation}
Thus, from the equation $U^{ij} w_{ij}=F$ with boundary value $w=\psi$ where $\psi\in W^{2, q}(\Omega)$, we conclude that 
 $w\in W^{2, q}(\Omega)$ and therefore $u\in W^{4,q}(\Omega)$ with universal estimate
 \begin{equation*}
   \|u\|_{W^{4,q}(\Omega)}\leq C_7.
   \end{equation*}
   
It remains to consider the case $f\equiv 1$ and $g\equiv 1$, $\varphi\in C^{4, \beta}(\overline{\Omega})$, and $\psi\in C^{2, \beta}(\overline{\Omega})$. 
In this case,  we need to establish a priori estimates for $u\in C^{4,\beta}(\overline{\Omega})$. As above, instead of (\ref{C5_eq}), we have
\begin{equation}
\label{C7_eq}
\|F \|_{C^{\frac{\alpha}{n-1}}(\overline{\Omega})}\leq C_7.
\end{equation}
Thus, from the equation $U^{ij} w_{ij}=F$ with boundary value $w=\psi$ where $\psi\in C^{2, \beta}(\overline{\Omega})$, we conclude that 
 $w\in C^{2, \gamma}(\overline\Omega)$ where
 $\gamma:= \min\{\frac{\alpha}{n-1},\beta\}$
  and therefore $u\in C^{4,\gamma}(\overline{\Omega})$ with the universal estimate
 $
   \|u\|_{C^{4,\gamma}(\overline{\Omega})}\leq C_8.
   $
  With this estimate, we can improve (\ref{C7_eq}) to  
  \begin{equation}
\label{C9_eq}
\|F \|_{C^{\beta}(\overline{\Omega})}\leq C_9.
\end{equation}
As above, we find that 
$u\in C^{4,\beta}(\overline{\Omega})$ with the universal estimate
 $
   \|u\|_{C^{4,\beta}(\overline{\Omega})}\leq C_{10}.
   $
\end{proof}
\begin{proof}[Proof of Theorem \ref{thm_sm}]
Without loss of generality, we can assume that $\inf_{\p\Omega}\psi =1.$
We consider the following second boundary value problem for a uniformly convex function $u$:
\begin{equation}
\label{Abreu_Newton2}
  \left\{ 
  \begin{alignedat}{2}U^{ij}w_{ij}~& =-(\Delta u)^{\frac{1}{n-1}} (\det D^2 u)^{\frac{n-2}{n-1}} f_\gamma(\cdot, u, Du, D^2 u)~&&\text{in} ~\Omega, \\\
 w~&= (\det D^2 u)^{-1}~&&\text{in}~ \Omega,\\\
u ~&=\varphi~&&\text{on}~\p \Omega,\\\
w ~&= \psi~&&\text{on}~\p \Omega.
\end{alignedat}
\right.
\end{equation}
for some $\gamma\in (0, 1)$ to be chosen later,
where
$$f_\gamma(\cdot, u, Du, D^2 u)=\min\{ \frac{-\gamma F(\cdot, u, Du, D^2 u)}{(\Delta u)^{\frac{1}{n-1}} (\det D^2 u)^{\frac{n-2}{n-1}}}, 1\}.$$
By our assumption (ii) on $F$, when $u$ is a  $C^2$ convex function, we have $0\leq f_\gamma\leq 1$.
By Proposition \ref{thm_N} (with $g\equiv 0$), (\ref{Abreu_Newton2}) has a solution $u\in W^{4, q}(\Omega)$ for all $q<\infty$. Thus, the first equation of (\ref{Abreu_Newton2})  holds pointwise a.e.

As in the proof of Proposition \ref{thm_N} (see (\ref{u2alpha})), we have the following a priori estimates
\begin{equation}
\label{u2_rec}
\| u \|_{C^{2,\beta}(\overline{\Omega})}\leq C_1~\text{and } C_1^{-1} I_n \leq D^2 u\leq C_1 I_n
\end{equation}
for some $C_1>0$ depending only on $\beta, \varphi, \psi, n$ and $\Omega$. 
Hence, using the assumption (i) on $F$, we find that
$$ \frac{-\gamma F(\cdot, u, Du, D^2 u)}{(\Delta u)^{\frac{1}{n-1}} (\det D^2 u)^{\frac{n-2}{n-1}}}<\frac{1}{2}$$
if $\gamma>0$ is small, depending only on $\beta, \varphi, \psi, n, F$ and $\Omega$.

Thus, if $\gamma>0$ is small, depending only on $\beta, \varphi, \psi, n, F$ and $\Omega$, then 
$$f_\gamma = \min\{ \frac{-\gamma F(\cdot, u, Du, D^2 u)}{(\Delta u)^{\frac{1}{n-1}} (\det D^2 u)^{\frac{n-2}{n-1}}}, 1\}=\frac{-\gamma F(\cdot, u, Du, D^2 u)}{(\Delta u)^{\frac{1}{n-1}} (\det D^2 u)^{\frac{n-2}{n-1}}}$$
in $\Omega$ and hence the first equation of (\ref{Abreu_Newton2})  becomes
$$U^{ij} w_{ij}=\gamma F(\cdot, u, Du, D^2 u).$$
Using this equation together with (\ref{u2_rec}) and $\varphi\in C^{4,\beta}(\overline{\Omega})$ and $\psi \in C^{2,\beta}(\overline{\Omega})$, we easily conclude $u\in  C^{4,\beta}(\overline{\Omega})$.
Thus, there is a uniform convex solution $u\in C^{4,\beta}(\overline{\Omega})$ to (\ref{Abreu_small}).

Assume now $F$ additionally satisfies (\ref{F_mono}). Then arguing as in the proof of \cite[Lemma 4.5]{Le18} replacing $f_\delta$ there by $\gamma F$, we obtain the uniqueness of $C^{4,\beta}(\overline{\Omega})$ solution to (\ref{Abreu_small}).
\end{proof}
\begin{rem}
Clearly, Theorem \ref{thm_sm} and its proof apply to dimensions $n=2$.
\end{rem}

\begin{proof}[Proof of Proposition \ref{thm_C}]  We stablish a priori estimates for a solution $u\in W^{4, q}(\Omega)$ to (\ref{Abreu_C}).
As in the proof of Proposition \ref{thm_N}, it suffices to obtain the lower and upper bounds on $\det D^2 u$.

 Observe that $$U^{ij} w_{ij} = f\Delta u\geq 0.$$ By the maximum principle, the function $w$ attains its maximum value on the boundary $\p\Omega$. Thus
 $$w\leq \sup_{\p\Omega}\psi<\infty.$$
 By the second equation of (\ref{Abreu_C}), this gives a bound from below for $\det D^2 u$:
 $$\det D^2 u\geq C^{-1}.$$
 On the other hand, we have
 $$\sum_{i, j=1}^2U^{ij} (w-\frac{\|f\|_{L^{\infty}(\Omega)}}{2}|x|^2)_{ij} = (f-\|f\|_{L^{\infty}(\Omega)})\Delta u\leq 0.$$
 By the maximum principle, the function $w-\frac{\|f\|_{L^{\infty}(\Omega)}}{2}|x|^2$ attains its minimum value on the boundary $\p\Omega$. Thus, using (\ref{size_psi}), we find that
 $$w-\frac{\|f\|_{L^{\infty}(\Omega)}}{2}|x|^2\geq \inf_{x\in\p \Omega}\left(w(x)-\frac{\|f\|_{L^{\infty}(\Omega)}}{2}|x|^2\right)>0.$$
This gives a positive lower bound for $w$, that is, 
$w\geq C^{-1}>0.$
Using the second equation of (\ref{Abreu_C}), we obtain a bound from above for $\det D^2 u$:
 $$\det D^2 u\leq C.$$
\end{proof}

 \begin{proof}[Proof of Theorem \ref{rad_thm}] 
 Recall that $f\in \{-1, 1\}$ and $\beta=p-1$ if $1<p<2$ and $\beta \in (0,1)$ if $p\geq 2$.
 
We first observe the following reduction of smoothness without any symmetry assumptions. Suppose that one has a uniformly convex solution $u\in C^2(\overline{\Omega})$ to (\ref{Abreu_rad}) with positive lower and upper bounds
on $\det D^2 u$:
\begin{equation}
\label{2side}
C^{-1}\leq \det D^2 u\leq C
\end{equation} 
for some $C>0$
and such that $w\in C^{\beta}(\overline{\Omega})$, then $u\in C^{3,\beta}(\overline{\Omega})$. Indeed,  using (\ref{2side}) together with the global $C^{2,\alpha}$ estimates \cite{TW08, S} for the Monge-Amp\`ere equation $\det D^2 u=w^{-1}$ with boundary data $\varphi \in C^{3,1}(\overline{\Omega})$ and right hand side $w^{-1}\in C^{\beta}(\overline{\Omega})$,  we have $u\in C^{2,\beta}(\overline{\Omega})$
 with estimates
  \begin{equation}
  \label{u2alpha_rad}
   \|u\|_{C^{2,\beta}(\overline{\Omega})}\leq C_1~\text{and } C_1^{-1} I_n\leq D^2 u\leq C_1 I_n.
   \end{equation}
     As a consequence, the second order operator $U^{ij} \p_{ij}$ is uniformly elliptic with H\"older continuous coefficients with exponent $\beta\in (0,1)$. 
   Note that $|Du|^{p-2} Du$ is H\"older continuous with exponent $\beta$. 
   Using the first equation of (\ref{Abreu_rad}), we see that the $C^{1,\beta}(\overline{\Omega})$ estimates for $w$ follows from \cite[Theorem 8.33]{GT}. Hence, we have the 
   $C^{3,\beta}(\overline{\Omega})$ estimates for $u$. 

 Now, we look for radial, uniformly convex solutions $u\in C^{2}(\overline{\Omega})$ to (\ref{Abreu_rad}).
 Assume that the convex function $u$ is of the form
$$u(x)= v(r)$$ where
$$v: [0, \infty)\rightarrow \R~\text{and }r=|x|.$$
Let us denote
$$^{'}=\frac{d}{dr}\quad\text{and } g(r):= v'(r).$$
The requirement that $u\in C^{2}(\overline{\Omega})$ forces
$$g(0)=v'(0)=0.$$
The next reduction in the proof of our theorem is the following claim.\\
{\bf Claim.} The existence of radial, uniformly convex solutions $u\in C^{2}(\overline{\Omega})$ to (\ref{Abreu_rad}) with  positive lower and upper bounds
on $\det D^2 u$ and a Holder continuous $w$ is equivalent to finding $g(1)>0$ satisfying the integral equation
\begin{equation}
\label{ccond}
\int_{0}^{g(1)} e ^{\frac{f}{p} s^p} s^{n-1} ds = \frac{1}{n\psi}\left(1+ f\int_0^{g(1)} e ^{\frac{f}{p} s^p} s^{p-1} ds\right).
\end{equation}

To prove the claim,
we compute
$$\det D^2 u = v^{''}(\frac{v'}{r})^{n-1},~
w= (\det D^2 u)^{-1}=\frac{1}{v^{''}} (\frac{r}{v'})^{n-1} \equiv W(r).
$$
Since $D^2 u$ and $(D^2 u)^{-1}$ are similar to 
$\text{diag}~ (v^{''}, \frac{v'}{r}, \cdots, \frac{v'}{r})$
and 
$\text{diag}~(\frac{1}{v^{''}}, \frac{r}{v^{'}}, \cdots, \frac{r}{v^{'}}),$ we can
compute
\begin{eqnarray*}U^{ij}w_{ij}= \frac{v^{''} (v^{'})^{n-1}}{r^{n-1}} \left(\frac{W^{''}}{v^{''}} + (n-1) \frac{W^{'}}{v^{'}}\right)
= \frac{[W^{'} (v^{'})^{n-1}]^{'}}{r^{n-1}}.
\end{eqnarray*}
Note that $v^{''}$ and $v'$ are all nonnegative.  Therefore,
\begin{equation}
\label{v_mono}
0\leq v'(r) \leq v'(1)~\text{for all } 0\leq r\leq 1.
\end{equation}
On the other hand, we have
$$\div (|Du|^{p-2} Du) = (p-1) (v')^{p-2}v^{''} + \frac{n-1}{r}(v')^{p-1}=\frac{[(v')^{p-1} r^{n-1}]^{'}}{r^{n-1}}.$$
The first equation of (\ref{Abreu_rad}) gives
$$ \frac{[W^{'} (v^{'})^{n-1}]^{'}}{r^{n-1}}= f\frac{[(v')^{p-1} r^{n-1}]^{'}}{r^{n-1}}$$
which implies that, for some constant $C$
$$W^{'} (v^{'})^{n-1} = f (v')^{p-1} r^{n-1} + C.$$
Since $v'(0)=0$, we find that $C=0$. Thus
$$W^{'}= f (v')^{p-1} (\frac{r}{v'})^{n-1}= f (v')^{p-1} v^{''} W.$$
It follows that
$$[\log W]^{'} = [\frac{f}{p}(v')^p]^{'}$$
and hence, recalling $W(1)=\psi$,
\begin{equation}
\label{W_eq}
\log W(r) = \log W(1) + \frac{f}{p}\left[(v'(r))^p - (v'(1))^p\right]=  \log \psi +  \frac{f}{p}\left[(v'(r))^p - (v'(1))^p\right].
\end{equation}
Therefore, in terms of $g=v^{'}$, we have after exponentiation,
\begin{equation}
\label{greq0}
e^{\frac{f}{p} [g(r)]^p} [g(r)]^{n-1} g'(r) = \frac{1}{\psi} e^{\frac{f}{p} [g(1)]^p} r^{n-1},
\end{equation}
which is equivalent to
\begin{equation}
\label{greq}
\int_{0}^{g(r)} e ^{\frac{f}{p} s^p} s^{n-1} ds = \frac{1}{n\psi} e^{\frac{f}{p} [g(1)]^p} r^{n}.
\end{equation}
Clearly, (\ref{greq}) leads to a solution to (\ref{Abreu_rad}) in terms of $g(1), n, p$ and $\psi$
 provided $g(1)>0$ satisfies the compatibility condition at $r=1$:
\begin{equation}\label{ccond2}
\int_{0}^{g(1)} e ^{\frac{f}{p} s^p} s^{n-1} ds = \frac{1}{n\psi} e^{\frac{f}{p} [g(1)]^p}.
\end{equation}
Because
$$e^{\frac{f}{p} [g(1)]^p}=1+ f\int_0^{g(1)} e ^{\frac{f}{p} s^p} s^{p-1} ds,$$
the compatibility condition (\ref{ccond2}) can be rewritten as in (\ref{ccond}).

Assume that $g(1)= v^{'}(1)>0$ has already been found, in terms of $n, p$ and $\psi$. We now establish positive lower and upper bounds
on $\det D^2 u$ and that $w\in C^{\beta}(\overline{\Omega})$. Indeed, from $0\leq g(r) \leq g(1)$, we can easily estimate
$$ e ^{\frac{-1}{p} [g(1)]^p} \frac{[g(r)]^n}{n}\leq  \int_{0}^{g(r)} e ^{\frac{f}{p} s^p} s^{n-1} ds  \leq e ^{\frac{1}{p} [g(1)]^p} 
\frac{[g(r)]^n}{n}.$$
Hence (\ref{greq}) gives $$C^{-1} r\leq g(r)\leq Cr$$ for some $C$ depends only on $g(1)>0$, $n, p$ and $\psi$. Thus, from (\ref{greq0}), we find that $v^{''}$ and $\frac{v^{'}(r)}{r}$ are bounded from below and above by positive constants. Therefore, we have positive lower and upper bounds
on $\det D^2 u=  v^{''}(\frac{v'}{r})^{n-1}$. Moreover, $v'(r) = |Du(x)|\in C^{\alpha}(\overline{\Omega})$
for all $\alpha\in (0, 1)$. 
Using (\ref{W_eq}), 
we also find that $W$, and hence $w$, is in $C^{\alpha}(\overline{\Omega})$. In particular, $w\in C^{\beta}(\overline{\Omega})$.
   
We have reduced our theorem to the existence and uniqueness of $g(1)>0$ solving (\ref{ccond}) which we now address.\\
(i) Recall that $f=-1$. Note that (\ref{ccond}) becomes
\begin{equation*}
\int_{0}^{g(1)} e ^{\frac{-1}{p} s^p} s^{n-1} ds = \frac{1}{n\psi}\left(1-\int_0^{g(1)} e ^{\frac{-1}{p} s^p} s^{p-1} ds\right).
\end{equation*}
Clearly, there is a unique $g(1)>0$ solving the above integral equation. Hence, there is a unique radial, uniformly convex solution $u\in C^{3,\beta}(\overline{\Omega})$
 to (\ref{Abreu_rad}). \\
(ii) Recall that $f=1$ and $p\in (1, n]$. Note that (\ref{ccond}) becomes
\begin{equation}
\label{HIeq}
H(g(1)= I(g(1))
\end{equation} where
\begin{equation}
\label{HIdef}
H(t):=\int_{0}^{t} e ^{\frac{1}{p} s^p} s^{n-1} ds \quad\text{and } I(t):= \frac{1}{n\psi}\left(1+\int_0^{t} e ^{\frac{1}{p} s^p} s^{p-1} ds\right)\equiv \frac{1}{n\psi} e^{\frac{t^p}{p}}.
\end{equation}
Consider first the case $p=n$. Then 
$$H(t) = e^{\frac{t^n}{n}}-1\quad \text{and } I(t)= \frac{1}{n\psi} e^{\frac{t^n}{n}}.$$ 
Therefore, from (\ref{HIeq}) we find an explicit formula for $g(1)$ from the equation
$$e^{\frac{1}{n} [g(1)]^n}= \frac{n\psi}{n\psi-1},$$
showing that existence and uniqueness of a solution $g(1)>0$ to (\ref{ccond}) when 
 $\psi >\frac{1}{n}$. As a result, there is a unique radial, uniformly convex solution $u\in C^{3,\beta}(\overline{\Omega})$
 to (\ref{Abreu_rad}). Moreover, $\psi>\frac{1}{n}$ is also the optimal condition for the existence of a radial solution to (\ref{Abreu_rad}).

Now we consider the case $p\in (1, n)$ and  $\psi >0$. We show that (\ref{HIeq}) has a unique solution $g(1)>0$ and hence there is a unique radial, uniformly convex solution $u\in C^{3,\beta}(\overline{\Omega})$
 to (\ref{Abreu_rad}). Indeed, since $1<p<n$, the integrand of $H(t)$ grows faster than that of $I(t)$. Since $H(0)=0< I(0)=\frac{1}{n\psi}$, the function $H(t)$ will cross $I(t)$ for the first time from below at some point $t_0>0$. Thus $g(1)=t_0>0$
 is a solution of (\ref{HIeq}). To show the uniqueness of $g(1)$, we show that if $t>t_0$ then $H(t)> I (t)$. 
Indeed, using the definition of $t_0$, we find that $H'(t_0)\geq I'(t_0)$. This means that
$$e ^{\frac{1}{p} t_0^p} t_0^{n-1} \geq \frac{1}{n\psi}e ^{\frac{1}{p} t_0^p} t_0^{p-1},$$
or, equivalently,
$$t_0^{n-p}\geq \frac{1}{n\psi}.$$
Thus, if $s>t_0$ then
$s^{n-p}> \frac{1}{n\psi}$, that is,
$$e ^{\frac{1}{p} s^p} s^{n-1} > \frac{1}{n\psi}e ^{\frac{1}{p} s^p} s^{p-1},$$
and hence, for any $t>t_0$, we have
$$H(t) = H(t_0) +\int_{t_0}^t e ^{\frac{1}{p} s^p} s^{n-1} ds > I(t_0) + \frac{1}{n\psi}\int_{t_0}^t e ^{\frac{1}{p} s^p} s^{p-1} ds= I(t).$$

(iii) Recall that $f=1$ and $p>n$. Assume that $\psi\geq M(n, p):= 1+ \frac{e^{1/p}}{n}\left(\int_{0}^1 e ^{\frac{1}{p} s^p} s^{n-1} ds\right)^{-1}$. Then, there is  a solution  $g(1)>0$ to (\ref{HIeq}) where $H$ and $I$ are defined as in (\ref{HIdef}). Indeed, in this case, we have
$1>  \frac{e^{1/p}}{n\psi} [H(1)]^{-1}= \frac{I(1)}{H(1)}$. Therefore $I(1)<H(1)$ while 
$I(0)>H(0)$. Thus, (\ref{HIeq}) has a solution $g(1)\in (0, 1)$. Consequently,  there is a radial, uniformly convex solution $u\in C^{3,\beta}(\overline{\Omega})$
 to (\ref{Abreu_rad}). 
 \end{proof}
\begin{rem}
When $p>n$, radial solutions in Theorem \ref{rad_thm} (iii) are not unique in general. This corresponds to multiple crossings of $H$ and $I$ defined in (\ref{HIdef}). For example, this is in fact the case of $n=2$, $p=4$ and $\psi=1$. We can plot the graphs of 
$H$ and $I$ using Maple to find that, on $[0, 2]$, they cross twice at $t_1\in (1, 6/5)$ and $t_2 \in (3/2, 2)$. 
\end{rem}
{\bf Acknowledgement.} The author would like to thank Connor Mooney for critical comments on a previous version of this paper. The author also thanks the anonymous referee for his/her crucial comments and suggestions that help strengthen and simplify the proof of Theorem \ref{rad_thm}.

\end{document}